
\documentclass[reqno, 11pt]{amsart}

\usepackage{amsthm,amsmath,amssymb,amsfonts,bm}
\usepackage{stmaryrd}
\usepackage{bbold}
\usepackage{times}
\usepackage[all]{xy}
\usepackage{mathrsfs}
\usepackage{graphicx}
\usepackage{tikz}
\usetikzlibrary{matrix,calc,arrows}
\usepackage{comment}

\textwidth=34pc
\oddsidemargin=30pt
\evensidemargin=30pt


\theoremstyle{plain}
\newtheorem{thm}{Theorem}
\newtheorem{lem}[thm]{Lemma}
\newtheorem{cor}[thm]{Corollary}
\newtheorem{prop}[thm]{Proposition}

\theoremstyle{definition}

\newtheorem*{claim}{Claim}

\newtheorem*{notat}{Notations and conventions}

\theoremstyle{remark}


\newcommand{\Hom}{\operatorname{Hom}}

\newcommand{\ant}{{\mathsf{S}}}

\newcommand{\define}{\ \stackrel{\text{\rm def}}{=}\ }

\newcommand{\Id}{\operatorname{Id}}

\newcommand{\onto}{\twoheadrightarrow}

\newcommand{\gen}[1]{\langle{#1}\rangle}

\renewcommand{\k}{\mathbb{k}}
\renewcommand{\phi}{\varphi}

\newcommand{\bdot}{\,\text{\raisebox{-.45ex}{$\boldsymbol{\cdot}$}}\,}

\newcommand{\cC}{\mathcal{C}}

\renewcommand{\d}{\delta}

\newcommand{\fg}{\mathfrak{g}}

\newcommand{\fsl}{\mathfrak{sl}}

\newcommand{\e}{\varepsilon}

\newcommand{\ZZ}{\mathbb{Z}}

\newcommand{\cat}[1]{\operatorname{\mathsf{#1}}}
\newcommand{\Mod}[1]{{}_{#1}\!\cat{Mod}}

\newcommand{\fin}[1]{{#1}_{\text{\rm fin}}}
\newcommand{\ad}[1]{{#1}_{\text{\rm ad}}}
\newcommand{\adfin}[1]{{#1}_{\text{\rm ad\,fin}}}

\newdir{ >}{{}*!/-5pt/\dir{>}}


\begin{document}

\title[On the Adjoint Representation of a Hopf Algebra]%
{On the Adjoint Representation of a Hopf Algebra}

\author{Stefan Kolb}
\address{School of Mathematics, Statistics and Physics, Newcastle University, Newcastle
NE1 7RU, UK}

\author{Martin Lorenz}
\address{Department of Mathematics, Temple University, Philadelphia, PA 19122, USA}
    
\author{Bach Nguyen}
    
\author{Ramy Yammine}


\subjclass[2010]{16T05, 16T20}

\keywords{infinite-dimensional Hopf algebra, 
cocommutative Hopf algebra, pointed Hopf algebra, Hopf subalgebra, adjoint representation, locally finite part,
tensor functor, coideal subalgebra, $\Delta$-methods, Dietzmann's Lemma}

\maketitle

\begin{abstract}
We consider the adjoint representation of a Hopf algebra $H$ focusing on the locally
finite part, $\adfin H$, defined as the sum of all finite-dimensional subrepresentations. 
For virtually cocommutative $H$ (i.e., $H$ is finitely generated as module over a cocommutative Hopf subalgebra), 
we show that $\adfin H$ is a Hopf subalgebra of
$H$. This is a consequence of the fact, proved here, that locally finite parts 
yield a tensor functor on the module category of 
any virtually pointed Hopf algebra. For general Hopf algebras, $\adfin H$ is shown to be a left coideal subalgebra.
We also prove a version of Dietzmann's Lemma from group theory for
Hopf algebras.
\end{abstract}

\maketitle


\section{Introduction}


\subsection{} 
\label{SS:Intro1}

Let $H$ be a Hopf algebra over a field $\k$, with comultiplication $\Delta h = h_{(1)} \otimes h_{(2)}$,
antipode $\ant$, and counit $\e$. The left \emph{adjoint action} of $H$ on itself is defined by
\begin{equation}
\label{E:ad}
k.h = k_{(1)} h\, \ant(k_{(2)}) \qquad (h,k \in H).
\end{equation}
This action makes $H$ a left $H$-module algebra that will be denoted by $\ad H$.
Our main interest is in the locally finite part,
\[
\adfin H = \{ h \in H \mid \dim_\k H.h < \infty \}.
\]
Of course, if $H$ is finite dimensional, then $\adfin H = H$; so we are primarily concerned with
infinite-dimensional Hopf algebras. In the case of a group algebra $H = \k G$, 
it is well known and easy to see that $\adfin H = \k\Delta$, the subgroup algebra of the so-called FC-center
$\Delta = \Delta(G) = \{ g \in G \mid g \text{ has finitely many $G$-conjugates}\}$. The $\Delta$-notation
for FC-centers, introduced by Passman and not to be confused with the 
traditional comultiplication notation, has led
to the nomenclature ``$\Delta$-methods'' in the study of group algebras:
a significant number of ring-theoretic properties of group algebras are controlled by the FC-center; 
see \cite{dP77}. 
The locally finite part of the adjoint representation has since also been explored for enveloping algebras \cite{jBdP90}, 
\cite{jBdP92}, \cite{jBdP93}
and for more general Hopf algebras \cite{jB06}, \cite{aJgL94}, \cite{gL02}.


\subsection{}
\label{SS:Intro2}

Locally finite parts may of course be defined for arbitrary representations as the sum of all finite-dimensional 
subrepresentations: for any left module $V$ over a $\k$-algebra $R$,
\[
\fin V \define \{ v \in V \mid \dim_\k R.v < \infty \}.
\]
This gives a functor $\fin\bdot$ on the category $\Mod R$ of left $R$-modules.
If $R$ is finitely generated as right module over some
subalgebra $T$, then $\fin V = \{ v \in V \mid \dim_\k T.v < \infty \}$ for any $V \in \Mod R$.
Adopting group-theoretical terminology, we will call a Hopf algebra $H$ \emph{virtually} of type $\cC$, where 
$\cC$ is a given class of Hopf algebras, if $H$ is finitely generated as right module over some
Hopf subalgebra $K \in \cC$. 


\subsection{}
\label{SS:Intro3}

The locally finite part $\fin A$ of any left $H$-module algebra $A$
is a subalgebra that contains the algebra of $H$-invariants,  $A^H = \{ a \in A \mid
h.a = \gen{\e,h}a \text{ for all } h \in H\}$. For $A = \ad H$, the invariant algebra 
coincides with the center of $H$ \cite[Lemma 10.1]{mL18}. The center is
rarely a Hopf subalgebra of $H$, even if $H$ is a group algebra,  
and $\adfin H$ need not be a Hopf subalgebra either in general, for example when $H$
is a quantized enveloping algebra; see \cite[Example 2.8]{jB06} or \cite{gL02}. 
However, we have the following result.
Recall that a left coideal subalgebra of $H$ is a subalgebra $C$ that is 
also a left coideal of $H$, i.e., $\Delta(C) \subseteq H \otimes C$.

\begin{thm}
\label{T:Cocom}
\begin{enumerate}
\item
$\adfin H$ is always a left coideal subalgebra of $H$.
\item
If $H$ is virtually cocommutative, then $\adfin H$ is a Hopf subalgebra of $H$.
\end{enumerate}
\end{thm}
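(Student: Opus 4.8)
The plan is to deduce both statements from the way the locally finite part functor $\fin\bdot$ interacts with the comultiplication, the crucial input being the equivariance of $\Delta$ for the adjoint action. A direct Sweedler computation gives, for $h,k\in H$,
\[
\Delta(k.h)=\sum k_{(1)}h_{(1)}\ant(k_{(4)})\otimes k_{(2)}h_{(2)}\ant(k_{(3)}),
\]
which says that $\Delta\colon\ad H\to H\otimes H$ is a morphism of left $H$-modules once $H\otimes H$ is given the ``nested'' adjoint action $k\boldsymbol{\cdot}(x\otimes y)=\sum k_{(1)}x\,\ant(k_{(4)})\otimes k_{(2)}y\,\ant(k_{(3)})$; equivalently, $H$ equipped with the adjoint action and the regular coaction $\Delta$ is a Yetter--Drinfeld module over itself. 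I will use this identity throughout.

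For part (a), that $\adfin H=\fin{(\ad H)}$ is a subalgebra is already recorded, since the locally finite part of any left $H$-module algebra is a subalgebra. For the coideal property I would argue as follows. Fix $h\in\adfin H$ and put $V=H.h$, a finite-dimensional adjoint submodule. Applying $\Delta$ and the equivariance identity shows that $\Delta(V)$ is again finite-dimensional, so the second legs of $\Delta h$ span a finite-dimensional subspace $\bar V\subseteq H$ (by coassociativity $\bar V$ is the left subcomodule of $H$ generated by $V$), and $\Delta h\in H\otimes\bar V$. It therefore suffices to prove $\bar V\subseteq\adfin H$, i.e.\ that every second leg again has finite-dimensional adjoint orbit. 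This step is the heart of the matter and is \emph{not} formal: the second-leg space $\bar V$ need not be adjoint-stable, and one cannot simply transport the module structure across $\Delta$. Here I would invoke the Hopf-algebraic version of Dietzmann's Lemma to show that the finitely many second legs of $\Delta h$, together with the finite adjoint orbit of $h$, generate only a finite-dimensional adjoint-stable subspace; this yields $\bar V\subseteq\adfin H$ and hence $\Delta(\adfin H)\subseteq H\otimes\adfin H$.

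For part (b), let $K$ be a cocommutative Hopf subalgebra over which $H$ is finitely generated as a right module. By the reduction recalled in \S\ref{SS:Intro2}, $\adfin H=\{h\in H\mid\dim_\k K.h<\infty\}$ is the locally finite part of $H$ for the restricted left $K$-adjoint action, and all occurrences of $\fin\bdot$ below are computed over $K$. The key point is that, although the nested and diagonal actions on $H\otimes H$ differ in general, they \emph{coincide on $K$}: since $K$ is cocommutative the iterated coproduct of any $k\in K$ is invariant under permuting its tensor legs, so the identity above rearranges to $\Delta(k.h)=\sum k_{(1)}.h_{(1)}\otimes k_{(2)}.h_{(2)}$ for $k\in K$. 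Thus $\Delta\colon H\to H\otimes H$ is a morphism of left $K$-modules, the target carrying the diagonal $K$-adjoint action. Invoking the tensor functor property of the locally finite part over the (pointed) cocommutative $K$, namely $\fin{(M\otimes N)}=\fin{M}\otimes\fin{N}$, I get
\[
\Delta(\adfin H)\subseteq\fin{(\ad H\otimes\ad H)}=\adfin H\otimes\adfin H,
\]
so $\adfin H$ is a subbialgebra of $H$. Finally, for $k\in K$ the involutivity of the antipode on the cocommutative $K$ gives $\ant(k.h)=\sum\ant^2(k_{(2)})\,\ant(h)\,\ant(k_{(1)})=\sum k_{(2)}\,\ant(h)\,\ant(k_{(1)})=k.\ant(h)$, the last equality using cocommutativity of $K$; hence $\dim_\k K.\ant(h)\le\dim_\k K.h$ and $\ant(\adfin H)\subseteq\adfin H$. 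Therefore $\adfin H$ is a Hopf subalgebra.

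The main obstacle in both parts is the same, and it is exactly the input that is not categorical: controlling adjoint orbits of the ``components'' produced by $\Delta$. In (b) this is packaged as the hard inclusion $\fin{(M\otimes N)}\subseteq\fin{M}\otimes\fin{N}$ of the tensor functor theorem, whose proof for pointed $K$ is where the combinatorial Dietzmann-type argument does the work; in (a) the same difficulty reappears directly as the local finiteness of the second-leg space $\bar V$. I expect establishing (or invoking) the tensor functor property, together with its Dietzmann-based proof, to be the principal effort; the remaining bialgebra- and antipode-bookkeeping is then routine.
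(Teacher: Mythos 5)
Your part (b) is essentially the paper's argument: the cocommutativity of $K$ makes $\Delta$ a map of $K$-modules for the diagonal adjoint action, the tensor functor property of $\fin\bdot$ then gives $\Delta(\adfin H)\subseteq \adfin H\otimes\adfin H$, and the antipode computation is the same. One small caveat: a cocommutative $K$ need not be pointed over a general field, so you cannot directly cite the pointed-case tensor theorem; the paper inserts a base-change lemma ($\fin{(V_F)}=(\fin V)_F$ for a field extension $F/\k$) and passes to the algebraic closure to get the cocommutative case (Corollary~\ref{C:Field}). Also, the proof of the tensor functor theorem in the paper rests on the Taft--Wilson theorem and induction along the coradical filtration, not on a Dietzmann-type argument; Proposition~\ref{P:Dietz} is an independent result that plays no role in Theorem~\ref{T:Cocom}.

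Part (a), however, has a genuine gap exactly where you flag ``the heart of the matter.'' You correctly reduce to showing that the second legs of $\Delta a$ lie in $\adfin H$, i.e.\ that the finite-dimensional left coideal $L$ generated by $V=H.a$ consists of locally finite elements, and you correctly observe that $L$ need not be adjoint-stable. But you then propose to close the gap by invoking Proposition~\ref{P:Dietz}, and that lemma does not apply: its hypotheses require finite-dimensional left coideal \emph{subalgebras} whose sum is \emph{already} stable under the adjoint action, neither of which holds for $L$ (it is a coideal but not a subalgebra, and its ad-stability is essentially what you are trying to prove); moreover its conclusion is that the generated subalgebra is finite dimensional, which is not the statement that each second leg has finite-dimensional adjoint orbit. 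The paper's actual argument is a direct computation with no auxiliary lemma beyond the Finiteness Theorem for comodules: starting from $h_{(1)}\otimes h_{(2)}.a\otimes h_{(3)}\in H\otimes V\otimes H$, apply $\Id\otimes\Delta\otimes\Id$ and formula~\eqref{E:D} to land in $H\otimes H\otimes L\otimes H$, then multiply the outer legs into the second slot via $\ant(h_{(1)})(\cdot)h_{(5)}$; the outer factors telescope and one is left with $a_{(1)}\otimes h.a_{(2)}\in H\otimes L$ for all $h$, so $H.a_{(2)}\subseteq L$ is finite dimensional. That sandwiching trick is the missing idea in your write-up.
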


For a quantized enveloping algebra 
of a complex semisimple Lie algebra, $H = U_q(\fg)$, part (a) is due to Joseph and Letzter: it follows from
\cite[Theorem 4.10]{aJgL94} that $\adfin{U_q(\fg)}$ is a left coideal subalgebra and 
this fact is also explicitly stated as \cite[Theorem 5.1]{gL02}.
Part (b) extends an earlier result of Bergen \cite[Theorem 2.18]{jB06} 
to arbitrary characteristics.
While Bergen's proof is based on his joint work with Passman \cite{jBdP93}, which determines
$\adfin H$ explicitly for
group algebras and for enveloping algebras of Lie algebras in characteristic $0$,
our approach uses general Hopf-theoretic methods.


\subsection{}
\label{SS:Intro4}

Some of our work is in the context of (virtually) pointed Hopf algebras.
Over an algebraically closed field, all cocommutative Hopf algebras are pointed \cite[Lemma 8.0.1]{mS69}.
However, many pointed Hopf algebras of interest are not necessarily cocommutative.
Examples include the algebras of polynomial functions
of solvable connected affined algebraic groups (over an algebraically closed base field) and quantized
enveloping algebras of semisimple Lie algebras.
It turns out that $\fin\bdot$ is a tensor functor for virtually pointed Hopf algebras:

\begin{thm}
\label{T:LocFin}
If $H$ is virtually pointed, then $\fin{(V \otimes W)} = \fin V \otimes \fin W$  
for any $V,W \in \Mod H$.
\end{thm}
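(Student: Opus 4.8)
The plan is to reduce to the case that $H$ itself is pointed and then prove the two inclusions separately, the nontrivial content being the reverse inclusion $\fin{(V\otimes W)}\subseteq\fin V\otimes\fin W$. First I would dispose of the word ``virtually''. If $H$ is finitely generated as a right module over a pointed Hopf subalgebra $K$, then by the observation in \S\ref{SS:Intro2} one has $\fin V=\{v\in V\mid\dim_\k K.v<\infty\}$ for every $V\in\Mod H$; that is, $\fin\bdot$ is computed over $K$. Since $K$ is a Hopf subalgebra, $\Delta(K)\subseteq K\otimes K$, so the restriction to $K$ of the $H$-module $V\otimes W$ is exactly $V|_K\otimes W|_K$. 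Hence the identity for $H$ follows from the same identity for the pointed Hopf algebra $K$, and I may assume $H$ is pointed. The one structural input I would then invoke is that a pointed Hopf algebra has bijective antipode, so that $\ant^{-1}$ is available and satisfies $\sum\ant^{-1}(h_{(2)})\,h_{(1)}=\gen{\e,h}1$.

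The inclusion $\fin V\otimes\fin W\subseteq\fin{(V\otimes W)}$ is the easy one and needs no hypothesis: for $v\in\fin V$ and $w\in\fin W$ the submodule $H.(v\otimes w)$ lies in $(H.v)\otimes(H.w)$, a finite-dimensional space. For the reverse inclusion I would use the elementary identity $(\fin V\otimes W)\cap(V\otimes\fin W)=\fin V\otimes\fin W$ and prove the two one-sided statements $\fin{(V\otimes W)}\subseteq\fin V\otimes W$ and $\fin{(V\otimes W)}\subseteq V\otimes\fin W$. In both cases I start from $x\in\fin{(V\otimes W)}$, set $M=H.x$, and use that the finite-dimensional space $M$ is contained in $V_0\otimes W_0$ for some finite-dimensional $V_0\subseteq V$, $W_0\subseteq W$; note $h_{(1)}.x,\ h_{(2)}.x\in M$ since $M$ is a submodule.

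For $\fin{(V\otimes W)}\subseteq V\otimes\fin W$, write $x=\sum_i v_i\otimes w_i$ with the $v_i$ linearly independent. The antipode axiom $\sum\ant(h_{(1)})\,h_{(2)}=\gen{\e,h}1$ yields, after summing over the coproduct of $h$,
\[
\sum_{(h)}\big((\ant(h_{(1)})\bdot)\otimes\mathrm{id}_W\big)\big(h_{(2)}.x\big)=\sum_i v_i\otimes(h.w_i).
\]
The left-hand side lies in $V\otimes W_0$ because each operator fixes the $W$-tensorand inside $W_0$; linear independence of the $v_i$ then forces $h.w_i\in W_0$ for all $h$, so $H.w_i\subseteq W_0$ and $w_i\in\fin W$. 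For $\fin{(V\otimes W)}\subseteq\fin V\otimes W$ I argue symmetrically, now writing $x=\sum_i v_i\otimes w_i$ with the $w_i$ linearly independent and using $\ant^{-1}$: the identity $\sum\ant^{-1}(h_{(2)})\,h_{(1)}=\gen{\e,h}1$ gives
\[
\sum_{(h)}\big(\mathrm{id}_V\otimes(\ant^{-1}(h_{(2)})\bdot)\big)\big(h_{(1)}.x\big)=\sum_i (h.v_i)\otimes w_i\in V_0\otimes W,
\]
whence $H.v_i\subseteq V_0$ and $v_i\in\fin V$.

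The step I expect to be the real obstacle---and the only place the hypothesis is used---is this last inclusion. The first (``$W$-side'') identity is valid over every Hopf algebra, but the analogous maneuver on the $V$-side requires inverting the antipode; without a bijective antipode the coproduct asymmetry obstructs the cancellation, and I would expect $\fin{(V\otimes W)}\subseteq\fin V\otimes W$ to fail for suitable non-pointed $H$. Thus the crux is to package the reduction to $K$ so that the bijective-antipode identity for $\ant^{-1}$ is genuinely available, and to verify carefully that the displayed operators keep the relevant tensor factor inside the fixed finite-dimensional $V_0$ (resp.\ $W_0$) even though their $H$-arguments vary with $h$.
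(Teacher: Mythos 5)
Your proof is correct, but it takes a genuinely different route from the paper's for the hard inclusion. After the same reduction to pointed $H$, the paper attaches to the finite-dimensional submodule $U=H.u$ its minimal ``support'' subspaces $U'\subseteq V$ and $U''\subseteq W$ and proves in Lemma~\ref{L:U'U''}(c) that these are again $H$-submodules; that proof is an induction along the coradical filtration via the Taft--Wilson theorem, so pointedness is exploited through the coalgebra structure. You instead establish the two one-sided containments $\fin{(V\otimes W)}\subseteq V\otimes\fin W$ and $\fin{(V\otimes W)}\subseteq \fin V\otimes W$ directly by the untwisting identities $\sum\ant(h_{(1)})h_{(2)}=\gen{\e,h}1$ and $\sum\ant^{-1}(h_{(2)})h_{(1)}=\gen{\e,h}1$ and then intersect. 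Your key steps check out: the element $\sum_{(h)}\ant(h_{(1)})\otimes(h_{(2)}.x)$ lies in $H\otimes M\subseteq H\otimes V_0\otimes W_0$, so acting on the first tensorand lands in $V\otimes W_0$, and linear independence of the $v_i$ extracts $h.w_i\in W_0$; dually on the other side. Pointedness enters your argument only through bijectivity of $\ant$ --- this is a genuine theorem (Radford: the antipode of a pointed Hopf algebra is bijective) and should be cited rather than asserted --- but, granting it, your argument proves strictly more than the stated theorem: the identity holds for any Hopf algebra that is virtually one with bijective antipode, and your first one-sided inclusion $\fin{(V\otimes W)}\subseteq V\otimes\fin W$ needs no hypothesis on $H$ at all. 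This also corrects your closing speculation: the $\fin V\otimes W$ inclusion can only fail when $\ant$ is not bijective, not merely when $H$ fails to be pointed. What the paper's heavier route packages explicitly is the structural fact that the minimal supports $U'$, $U''$ of an $H$-submodule are themselves submodules (the ``if and only if'' Claim inside Lemma~\ref{L:U'U''}); your method recovers this as well, by taking $V_0=U'$ and $W_0=U''$, again under bijectivity of the antipode rather than pointedness per se.
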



\subsection{}
\label{SS:IntroDietzmann}

A standard group-theoretic fact, known as Dietzmann's Lemma, states that any finite subset
of a group that is stable under conjugation and consists of torsion elements generates a finite subgroup
(\cite{aD37} or \cite[\S 53]{aK56}). Our final result is the following version of Dietzmann's Lemma
for arbitrary Hopf algebras.

\begin{prop}
\label{P:Dietz}
Let $C_1,\dots,C_k$ be finite-dimensional left coideal subalgebras of $H$ and assume that
$C = \sum_{i=1}^k C_i$ is stable under the adjoint action of $H$. Then $C$ generates a finite-dimensional 
subalgebra of $H$.
\end{prop}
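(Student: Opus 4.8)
The plan is to show that the ascending chain of finite-dimensional subspaces $F_n\define \k 1 + C + C^2 + \dots + C^n$, where $C^j$ denotes the span of all products of $j$ elements of $C$, eventually stabilizes; since $\gen{C} = \bigcup_n F_n$ and $C = \sum_{i=1}^k C_i$ is finite-dimensional, it suffices to produce an $n$ with $C^{n+1}\subseteq F_n$, for then $F_{n+1}=F_n$ and a routine induction gives $F_m=F_n$ for all $m\ge n$, whence $\dim_\k\gen{C}<\infty$. Two preliminary observations frame the argument. First, $C\subseteq\adfin H$, since for $c\in C$ the orbit $H.c$ lies in the finite-dimensional, adjoint-stable space $C$. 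Second, $\gen{C}$ is itself an adjoint-stable left coideal subalgebra: each $C_i$ is a left coideal, so $\Delta(C)\subseteq H\otimes C$ and hence $\Delta(\gen{C})\subseteq H\otimes\gen{C}$; and because the adjoint action is a module-algebra action, $h.(c_1\cdots c_n) = (h_{(1)}.c_1)\cdots(h_{(n)}.c_n)\in C^n$, so stability under the adjoint action also passes to products.

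The engine of the argument is the identity $ab=(a_{(1)}.b)\,a_{(2)}$, valid for all $a,b\in H$, which transports a factor past another at the cost of an adjoint twist. I would combine it with two structural facts about the pieces $C_i$. Since $C_i$ is a left coideal, $\Delta(C_i)\subseteq H\otimes C_i$, so the right leg $a_{(2)}$ of any $a\in C_i$ again lies in $C_i$; thus moving a $C_i$-factor to the right keeps the transported tail inside $C_i$, while each factor it passes is replaced by its adjoint twist and so remains in $C$. Since $C_i$ is a subalgebra, $C_iC_i\subseteq C_i$, so two adjacent $C_i$-factors collapse into one. Emulating Dietzmann's collection process, given a product whose factors each lie in a single component $C_i$, I would locate two factors of the same type, transport one along the word until it sits beside the other, and collapse the resulting $C_i$-pair, thereby lowering the number of factors by one; iterating over the finitely many components should bound the length.

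The main obstacle is that the adjoint action mixes the components: moving a $C_i$-factor past a factor $b$ replaces $b$ by $a_{(1)}.b$, which lies in $C$ but in general spreads over all of the $C_j$. Unlike conjugation in a group, which merely permutes a finite set of letters and respects their types, a single collection step here turns component-pure factors into ``mixed'' elements of $C$ that no longer collapse; this is exactly the point at which the group-theoretic argument must be upgraded. The plan to resolve it is to run the collection not on $C$ but on an associated graded on which the adjoint action is triangular: taking a composition series $0=V_0\subset V_1\subset\dots\subset V_s=C$ of $C$ as a module under the adjoint action, each $V_j$ is preserved by every $h.(\,\cdot\,)$, so on the graded pieces the adjoint twist of a factor agrees with the factor up to strictly lower filtration degree, and types of leading terms are again well defined. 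The technical heart—which I expect to be the hard part—is reconciling this triangular structure, which controls the twisting, with the multiplicative collapse $C_iC_i\subseteq C_i$, which lives on the unrelated subalgebra decomposition; concretely, one must arrange the induction on filtration degree and on the number of factors so that every collection step strictly decreases a suitable well-ordered complexity on leading terms, forcing the desired bound $C^{n+1}\subseteq F_n$.
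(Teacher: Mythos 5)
You have the right engine --- the identity $cd = (c_{(1)}.d)\,c_{(2)}$ together with $\Delta(C_i) \subseteq H \otimes C_i$ and the adjoint stability of $C$ --- but the obstacle you single out as the ``technical heart'' is not actually there, and the machinery you propose to overcome it (a composition series of $C$ as an adjoint $H$-module, leading terms on the associated graded) is both unnecessary and unsound: on a composition factor the adjoint action is an arbitrary irreducible representation, so your claim that ``the adjoint twist of a factor agrees with the factor up to strictly lower filtration degree'' fails unless the composition factors happen to be trivial. The mixing you worry about is handled by plain linearity: after the swap, $c_{(1)}.d$ lies in $C = \sum_j C_j$, so you expand it as a sum of component-pure elements, and by multilinearity the monomial becomes a finite sum of monomials of the same length whose factors are again each in a single $C_j$. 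Nothing about these new factors needs to be controlled; the only quantity that matters is the distance between the two factors sharing an index, and that is what decreases.

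Concretely, the paper's argument runs as follows. For a monomial $x = c_{i_1}\cdots c_{i_s}$ with $c_{i_l} \in C_{i_l}$ and $s > k$, pigeonhole gives two factors with the same index; let $\beta$ be the minimal gap over all such pairs. If $\beta = 0$, collapse the adjacent pair using $C_iC_i \subseteq C_i$, so $x$ lies in the span of products of length at most $s-1$. If $\beta > 0$, apply $cd = (c_{(1)}.d)c_{(2)}$ to the left factor $c$ of a minimal pair and its immediate right neighbour $d$: the tail $c_{(2)}$ stays in the same component $C_i$ as $c$ and has moved one step closer to its partner, so after expanding each $c_{(1)}.d \in C$ over the $C_j$, every summand is a length-$s$ component-pure monomial with strictly smaller $\beta$; induct on $\beta$. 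This yields that products of length $s$ reduce to length $s-1$ for every $s > k$, hence the subalgebra generated by $C$ equals the finite-dimensional span of products of length at most $k$. You should also make the pigeonhole step explicit, since it is what produces a pair to collect and gives the explicit length bound $k$ at which the chain $F_n$ stabilizes.
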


Of course,  the subalgebra that is generated by $C$ is also a left coideal subalgebra,
stable under the adjoint $H$-action, and it is contained in $\adfin H$.
Our proof of Proposition~\ref{P:Dietz} will show that if all $C_i$ are in fact sub-bialgebras of $H$,
then it suffices to assume that $C$ is stable under the adjoint actions of all $C_i$.


\begin{notat}
We work over an arbitrary base field $\k$ and continue to write $\otimes = \otimes_\k$\,. 
As usual, $\bdot^* = \Hom_\k(\bdot\,,\k)$ denotes linear duals of $\k$-vector spaces.
Throughout, $H$ is a Hopf $\k$-algebra with
counit $\e$, antipode $\ant$, and comultiplication $\Delta h = h_{(1)} \otimes h_{(2)}$. 
\end{notat}


\section{Proofs} 
\label{S:Proofs}


\subsection{Proof of Theorem~\ref{T:Cocom}(a)}
\label{SS:ProofCocom(a)}

It follows from \eqref{E:ad} that the comultiplication satisfies
\begin{equation}
\label{E:D}
\Delta(k.h) = k_{(1)}h_{(1)}\ant(k_{(3)}) \otimes k_{(2)}.h_{(2)} \qquad (h,k \in H).
\end{equation}
For a given $a \in \adfin H$, consider the subspace $V = H.a$ and let $L$ denote the left coideal of $H$ that is generated
by $V$. Since $\dim_\k V < \infty$, we also have $\dim_\k L < \infty$ by the Finiteness
Theorem for comodules \cite[9.2.2]{mL18}. For any $h \in H$, we have 
$h_{(1)} \otimes h_{(2)}.a \otimes h_{(3)} \in H \otimes V \otimes H$.
Applying $\Id \otimes \Delta \otimes \Id$ and using \eqref{E:D}, we obtain
\[
h_{(1)} \otimes h_{(2)}a_{(1)}\ant(h_{(4)}) \otimes h_{(3)}.a_{(2)} \otimes h_{(5)} \in H \otimes H \otimes L \otimes H,
\]
which in turn implies
\[
\ant(h_{(1)}) h_{(2)}a_{(1)}\ant(h_{(4)})h_{(5)} \otimes h_{(3)}.a_{(2)} =  a_{(1)} \otimes h.a_{(2)} \in H \otimes L.
\]
Consequently, $\Delta a = a_{(1)} \otimes a_{(2)} \in H \otimes \adfin H$, 
proving Theorem~\ref{T:Cocom}(a). \qed


\subsection{Tensors}
\label{SS:Tens}

Let $V$ and $W$ be left $H$-modules, with $H$-operations indicated by a dot, 
and view $V \otimes W$ as $H$-module with the
usual $H$-operation on tensors: $h.(v\otimes w) = h_{(1)}.v \otimes h_{(2)}.w$. 
For any $\k$-subspace $U \subseteq V \otimes W$, put
\begin{equation*}
U' = \sum_{f \in W^*} (\Id_V \otimes f)(U) \subseteq V \otimes \k = V
\end{equation*}
and \begin{equation*}
U'' = \sum_{f \in V^*} (f \otimes \Id_W)(U) \subseteq \k \otimes W = W.
\end{equation*}
Parts (a), (b) of the following lemma are standard (e.g., \cite[chap.~II \S7.8]{nB70}), but (c) may be new.

\begin{lem}[notation as above]
\label{L:U'U''}
\begin{enumerate}
\item
$U'$ is a $\k$-subspace of $V$ such that $U \subseteq U' \otimes W$; in fact, $U'$ is the smallest such subspace 
(contained in all others). Similarly for $U''$. Moreover, $U \subseteq U'\otimes U''$.
\item
$U$ is finite dimensional if and only if $U'$ and $U''$ are both finite dimensional.
\item
Assume that $H$ is pointed. If $U$ is an $H$-submodule of $V \otimes W$, then $U'$ and $U''$
are $H$-submodules of $V$ and $W$, respectively.
\end{enumerate}
\end{lem}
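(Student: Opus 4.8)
The plan is to dispatch (a) and (b) by the routine slicing argument and to reserve the real work for (c). For (a), fix a basis $\{w_j\}$ of $W$ with coordinate functionals $w_j^\ast \in W^\ast$; every $u \in U$ is a finite sum $\sum_j v_j \otimes w_j$ with $v_j = (\Id \otimes w_j^\ast)(u) \in U'$, so $U \subseteq U' \otimes W$, and if $U \subseteq X \otimes W$ then applying $\Id \otimes f$ gives $U' \subseteq X$, which is minimality. By symmetry the same holds for $U''$, and $U \subseteq (U' \otimes W) \cap (V \otimes U'') = U' \otimes U''$. Part (b) is then immediate, since $U'$ and $U''$ are spanned by components of elements of $U$ while conversely $U \subseteq U' \otimes U''$.

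For (c) I will show that the stabilizer $\{a \in H : a.U' \subseteq U'\}$, which is a $\k$-subspace of $H$, is all of $H$. Writing a typical generator of $U'$ as $v = (\Id \otimes f)(u)$ with $u = \sum_i v_i \otimes w_i \in U$, the submodule hypothesis yields, for every $h \in H$ and $f \in W^\ast$,
\[
(\Id \otimes f)(h.u) = \sum_i f(h_{(2)}.w_i)\, h_{(1)}.v_i \in U',
\]
and the problem is to disentangle the two legs of $\Delta h$ so as to extract $h.v$. Throughout I use the right $H$-action on $W^\ast$ given by $(f \cdot a)(w) = f(a.w)$, under which $(\Id \otimes (f \cdot a))(u)$ is again a slice of $u$ and so lies in $U'$. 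The base case is $h = g$ a grouplike: then $\Delta g = g \otimes g$, the identity above reads $g.\bigl[(\Id \otimes (f \cdot g))(u)\bigr] \in U'$, and since $g$ is invertible (inverse $\ant(g)$) the map $f \mapsto f \cdot g$ is a bijection of $W^\ast$, so as $f$ and $u$ vary the argument $(\Id \otimes (f \cdot g))(u)$ sweeps out all generators of $U'$. Hence $g.U' \subseteq U'$, and by linearity $H_0.U' \subseteq U'$, where $H_0 = \k G$ for the group $G$ of grouplikes. This reparametrization trick also drives the inductive step.

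The induction is on the coradical filtration $H_0 \subseteq H_1 \subseteq \cdots$ of the pointed Hopf algebra $H$. Assuming $H_{n-1}.U' \subseteq U'$, take $h \in H_n$. The structural input, and the one place where pointedness is indispensable, is that the associated graded coalgebra is a $\k G$-bicomodule, so $H_n/H_{n-1}$ has a basis of bihomogeneous elements; lifting such an element to $x \in H_n$ of bidegree $(g,g')$ gives, by standard properties of the coradical filtration,
\[
\Delta x = g \otimes x + x \otimes g' + r', \qquad r' \in H_{n-1} \otimes H_{n-1}.
\]
Substituting this into the boxed identity splits $(\Id \otimes f)(x.u)$ into three pieces: the $g \otimes x$ piece equals $g.\bigl[(\Id \otimes (f \cdot x))(u)\bigr] \in U'$ by the base case; writing $r' = \sum_t p_t \otimes q_t$ with $p_t, q_t \in H_{n-1}$, the $r'$ piece is a sum of terms $p_t.\bigl[(\Id \otimes (f \cdot q_t))(u)\bigr] \in U'$ by the inductive hypothesis; and the remaining $x \otimes g'$ piece is exactly $x.\bigl[(\Id \otimes (f \cdot g'))(u)\bigr]$. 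Solving for the last piece puts $x.\bigl[(\Id \otimes (f \cdot g'))(u)\bigr] \in U'$, and since $g'$ is invertible the reparametrization $f \mapsto f \cdot g'$ again sweeps out all generators of $U'$, giving $x.U' \subseteq U'$. As $H_n$ is spanned by $H_{n-1}$ together with such lifts $x$, we conclude $H_n.U' \subseteq U'$, and since $H = \bigcup_n H_n$ this proves $U'$ is an $H$-submodule.

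Finally, running the computation with the legs interchanged, i.e. slicing by $f \in V^\ast$ in the first factor, the same decomposition of $\Delta x$ works mutatis mutandis: the grouplike terms $g \otimes x$ and $x \otimes g'$ supply, respectively, the target term $x.(\,\cdot\,)$ to be solved for and a base-case term, while $r' \in H_{n-1} \otimes H_{n-1}$ once more feeds the inductive hypothesis. This gives $H.U'' \subseteq U''$ and completes (c). I expect the only genuine difficulty to be isolating and justifying the structural identity for $\Delta x$ — the bihomogeneous form of the top of the coproduct, which is precisely what fails for non-pointed $H$, where the extreme components of $\Delta h$ involve matrix-type subcoalgebras that cannot be inverted or reparametrized. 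Everything else is bookkeeping with the coproduct, the counit, and the invertibility of grouplikes.
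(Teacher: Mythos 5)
Your proof is correct and follows essentially the same route as the paper's: induction on the coradical filtration, the Taft--Wilson form $\Delta x = g\otimes x + x\otimes g' + (H_{n-1}\otimes H_{n-1}\text{-terms})$, and the invertibility of grouplikes to isolate the remaining term. The only difference is presentational — you slice with functionals $f\in W^*$ and track generators of $U'$, whereas the paper fixes a basis $(v_i)$ of $V$ and tracks the components $w_i\in U''$ directly; these are dual formulations of the same argument.
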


\begin{proof}
(a)
If $U \subseteq V' \otimes W$ for a subspace $V' \subseteq V$, then 
$(\Id_V \otimes f)(U) \subseteq (\Id_V \otimes f)(V' \otimes W) \subseteq V' \otimes \k = V'$ for all $f \in W^*$.
Thus, $U' \subseteq V'$, proving the minimality statement. On the other hand, any $u \in U$ can
be written as a finite sum $u = \sum_i v_i \otimes w_i$ with $v_i \in V$,  $w_i \in W$ and the $w_i$ may be chosen to 
be $\k$-linearly independent. Fixing $f_j \in W^*$ such that $\gen{f_j,w_i} = \d_{i,j}$, we obtain
$(\Id_V \otimes f_i)(u) = v_i \in U'$. This shows that $U \subseteq U' \otimes W$. Similarly, $U \subseteq V \otimes U''$
and so $U \subseteq (U' \otimes W) \cap (V \otimes U'') = U' \otimes U''$.

(b)
One direction is clear from the  inclusion $U \subseteq U'\otimes U''$. 
Now assume that $\dim_\k U = 1$, say $U = \k u$ with $u$ as in the proof of (a). Then $U'$ is generated
by the vectors $(\Id_V \otimes f)(u) = \sum_i v_i \gen{f,w_i}$ belonging to the subspace generated by 
the (finitely many) $v_i$. Thus $U'$ is finite dimensional in this case. Since $\bdot'$ evidently commutes 
with summation of subspaces,
it follows that $U'$ is finite dimensional whenever $U$ is so. The argument for $U''$ is analogous.

(c)
We will prove the following more general claim, for $H$ pointed. 

\begin{claim}
Let  $U \subseteq V \otimes W$ be a $\k$-subspace.
Then $H.U'' \subseteq U''$ if and only if $H.U \subseteq V \otimes U''$.
\end{claim}

One direction is clear: $H.U \subseteq H.(U' \otimes U'')
\subseteq H.U' \otimes H. U'' \subseteq V \otimes U''$ if $H.U'' \subseteq U''$.
The reverse implication will be proved below. 
Granting it for now, we may also conclude by symmetry that $H.U \subseteq U' \otimes W$ 
implies $H.U' \subseteq U'$. 
If $U$ is an $H$-submodule of $V \otimes W$, then $H.U \subseteq U \subseteq U' \otimes U''$ and hence 
$H.U' \subseteq U'$ and $H.U'' \subseteq U''$ by the Claim, proving (c).

To prove the remaining direction of the Claim, assume that $H.U \subseteq V \otimes U''$.
Fix a $\k$-basis $(v_i)_i$ of $V$ and write an arbitrary given $u \in U$ as
$u = \sum_i v_i \otimes w_i$ with $w_i = w_i(u) \in W$. 
As in the proof of (a), one sees that the vectors $w_i$ for
the various $u \in U$ generate the vector space $U''$. We need to show that $h.w_i \in U''$ for all $i$
and all $h\in H$. Let $(H_n)_{n \ge 0}$ denote the coradical filtration of $H$. Then $h \in H_n$
for some $n$. If $n = 0$, then we may assume that $h$ is grouplike. Thus, $h.u = \sum_i h.v_i \otimes h.w_i \in H.U
\subseteq V \otimes U''$. Since $h$ is invertible,
$(h.v_i)_i$ is a $\k$-basis of $V$ and it follows as in the proof of (a)
that $h.w_i \in U''$, as desired. Now let $n > 0$ and assume that $H_{n-1}.w_i \subseteq U''$ for all $i$.
By the Taft-Wilson Theorem (see \cite[Theorem 5.4.1]{sM93} or \cite[Section 4.3]{dR12}), we may assume that
$\Delta h = x \otimes h  + h \otimes y + \sum_j h'_j \otimes h''_j$ with $x,y \in H_0$ grouplike and $h'_j, h''_j \in H_{n-1}$\,.
Thus, 
\[
\sum_i x.v_i \otimes h.w_i + \sum_i h.v_i \otimes y.w_i + \sum_{i,j} h'_j.v_i \otimes h''_j.w_i = h.u
\in H.U \subseteq V \otimes U''.
\]
By induction, all $y.w_i, h''_j.w_i \in U''$.
Therefore, $\sum_i x.v_i \otimes h.w_i \in V \otimes U''$. Since $(x.v_i)_i$ is a $\k$-basis of $V$, 
we deduce once more that $h.w_i \in U''$ for all $i$, completing the proof.
\end{proof}


\subsection{Proof of Theorem~\ref{T:LocFin}}
\label{SS:LocFin}

We may assume that $H$ is pointed (\S\ref{SS:Intro2}). For given $V,W \in \Mod H$, 
the inclusion $\fin{(V \otimes W)} \supseteq \fin V \otimes \fin W$ follows directly from the fact that 
$H.(v \otimes w) \subseteq H.v \otimes H.w$ for all $v \in V$ and $w\in W$.
For the reverse inclusion, let
$u \in \fin{(V \otimes W)}$ and put $U = H.u$. Then $U$ is a finite-dimensional $H$-submodule of $V \otimes W$, and so
Lemma~\ref{L:U'U''} gives that $U \subseteq  U' \otimes U''$ with $U' \subseteq V$ and $U'' \subseteq W$ 
being finite-dimensional $H$-submodules. Therefore, $U' \subseteq \fin V$ and $U'' \subseteq \fin W$,
proving the desired inclusion. \qed


\subsection{Field extensions}
\label{SS:Field}

Let $R$ be a $\k$-algebra and let $F/\k$ be a field extension. Consider the $F$-algebra $R_F = R \otimes F$
and, for any $V \in \Mod R$, put $V_F = V \otimes F \in \Mod{R_F}$.

\begin{lem}[notation as above]
\label{L:Field}
$\fin{(V_F)} = (\fin V)_F$\,.
\end{lem}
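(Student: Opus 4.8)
The plan is to establish the two inclusions of $\fin{(V_F)} = (\fin V)_F$ separately, the inclusion $(\fin V)_F \subseteq \fin{(V_F)}$ being routine. Indeed, if $v \in \fin V$, then $R.v$ is spanned over $\k$ by finitely many vectors of $V$, and these same vectors span $R_F.(v \otimes 1)$ over $F$; hence $v \otimes 1 \in \fin{(V_F)}$. Since $\fin{(V_F)}$ is an $F$-subspace of $V_F$ and $(\fin V)_F$ is the $F$-span of the elements $v \otimes 1$ with $v \in \fin V$, this inclusion follows at once.

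For the reverse inclusion I would fix $x \in \fin{(V_F)}$ and set $N = R_F.x$, a finite-dimensional $F$-submodule of $V_F$. Regarded through the embedding $R \hookrightarrow R_F$, $r \mapsto r \otimes 1$, the space $N$ is also an $R$-submodule of $V_F = V \otimes F$, where $R$ acts on the first tensor factor only. I would then apply the construction $\bdot'$ of \S\ref{SS:Tens}, taking the second factor $W = F$ to be merely a $\k$-vector space, to form $N' = \sum_{f \in F^*}(\Id_V \otimes f)(N) \subseteq V$.

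Two properties of $N'$ must then be verified. First, $N'$ is finite-dimensional over $\k$. Although $N$ itself is typically infinite-dimensional over $\k$, finite $F$-dimensionality lets me write $N \subseteq V_1 \otimes F$ for a finite-dimensional $\k$-subspace $V_1 \subseteq V$, namely the $\k$-span of the finitely many vectors of $V$ occurring in a finite $F$-basis of $N$; the minimality statement in Lemma~\ref{L:U'U''}(a) then forces $N' \subseteq V_1$, so $\dim_\k N' < \infty$. Second, $N'$ is an $R$-submodule of $V$: this is immediate, since the $R$-action on the first factor commutes with the functionals $\Id_V \otimes f$ acting on the second, giving $r.(\Id_V \otimes f)(n) = (\Id_V \otimes f)(r.n) \in N'$ for $r \in R$, $n \in N$, $f \in F^*$. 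In particular this bypasses Lemma~\ref{L:U'U''}(c), which treats the diagonal Hopf action, as the present action lives on one factor only.

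Combining the two properties, $N'$ is a finite-dimensional $R$-submodule of $V$, whence $N' \subseteq \fin V$. Since $x \in N \subseteq N' \otimes F$ by Lemma~\ref{L:U'U''}(a) (assuming $R$ unital, so $x \in R_F.x$), I conclude $x \in \fin V \otimes F = (\fin V)_F$, which completes the reverse inclusion. The single genuine obstacle is the finiteness claim for $N'$: one cannot invoke Lemma~\ref{L:U'U''}(b) directly, as that presumes $N$ to be finite-dimensional over $\k$, so the finite-dimensional $V_1$ must be extracted by hand from a finite $F$-basis of $N$ and fed into the minimality of $N'$.
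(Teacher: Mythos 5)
Your proof is correct, but it takes a genuinely different route from the paper's. The paper argues the hard inclusion by contradiction: writing $x = \sum_i v_i \otimes \lambda_i$ with the $\lambda_i$ linearly independent over $\k$ and supposing some $v_i \notin \fin V$, it recursively builds a sequence $(r_n)$ in $R$ witnessing $\dim_\k R.v_1 = \infty$, then uses the first $F$-linear dependence among the $r_j.x$ together with a $\k$-linear projection $\pi\colon F \onto \k$ killing $\lambda_2,\dots,\lambda_r$ to contradict the construction. You instead reuse the machinery of \S\ref{SS:Tens}: setting $N = R_F.x$ and $N' = \sum_{f\in F^*}(\Id_V\otimes f)(N)$, you exhibit $N'$ directly as a finite-dimensional $R$-submodule of $V$ with $x \in N \subseteq N'\otimes F$. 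This is shorter and more structural, and you correctly identify the one genuine pitfall: Lemma~\ref{L:U'U''}(b) is unavailable because $N$ is usually infinite-dimensional over $\k$, so finiteness of $N'$ must come from writing $N \subseteq V_1 \otimes F$ with $V_1$ spanned by the $V$-components of a finite $F$-basis of $N$ and invoking the minimality in part (a); your observation that part (c) is not needed because $R$ acts on the first tensor factor only is also right. The only cosmetic caveat is that Lemma~\ref{L:U'U''} is stated for $H$-modules $V,W$, whereas you apply its part (a) with $W = F$ a bare $\k$-vector space; since that part is purely linear-algebraic (the paper cites Bourbaki for it), this is harmless but worth flagging. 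What the paper's argument buys is self-containment — Lemma~\ref{L:Field} stays logically independent of \S\ref{SS:Tens} — while yours buys brevity and makes transparent that no Hopf or pointedness hypotheses enter.
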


\begin{proof}
The inclusion $\fin{(V_F)} \supseteq (\fin V)_F$ is evident. For the reverse inclusion, let $v \in \fin{(V_F)}$
be given and write $v = \sum_{i=1}^r v_i \otimes \lambda_i$, where $v_i \in V$ and the $\lambda_i$ are $\k$-linearly
independent elements of $F$. We need to show that all $v_i \in \fin V$ for all $i$. Suppose 
this fails for $i=1$, say. Replacing $v$ by $v \lambda_1^{-1}$, we may assume
that $\lambda_1 = 1$. Since $\dim_\k R.v_1 = \infty$, we may recursively construct a sequence in $R$ by
putting $r_0 = 1$ and choosing $r_n \in R$ so that $r_n.v_1 \notin \gen{r_j.v_i \mid  i \le r, j < n}_\k$\,.
Since $v \in \fin{(V_F)}$,  we can let $m$ denote the first index so that the family $(r_j.v)_{j=0}^m$ is 
$F$-linearly dependent, say $\sum_{j=0}^m  r_j.v\,\xi_j = 0$ with $\xi_j \in F$ not all $0$. Then $\xi_m \neq 0$
by minimality of $m$; so we may assume that $\xi_m = 1$.
Thus,
\[
0 =  r_m.v + \sum_{j=0}^{m-1}  r_j.v \,\xi_j
= \sum_{i=1}^r r_m.v_i \otimes \lambda_i + \sum_{i=1}^r \sum_{j=0}^{m-1} r_j.v_i \otimes \lambda_i \xi_j\,.
\]
Choose a $\k$-linear projection $\pi \colon F \onto \k$ with $\pi(1) = 1$ but $\pi(\lambda_i) = 0$
for $i=2,\dots,r$ and apply the projection $\Id_V \otimes \pi \colon V_F \onto V$ to the above relation
to obtain
\[
0 = r_m.v_1 + \sum_{i=1}^r \sum_{j=0}^{m-1} r_j.v_i \,\pi( \lambda_i \xi_j).
\]
Thus, $r_m.v_1$ is a $\k$-linear combination of the vectors
$r_j.v_i$ $(i \le r, j < m)$, contrary to 
our construction of the sequence $(r_n)_{n\ge 0}$\,.
\end{proof}

We can now note the following consequence of Theorem~\ref{T:LocFin}.

\begin{cor}
\label{C:Field}
If $H$ is virtually cocommutative, then $\fin{(V \otimes W)} = \fin V \otimes \fin W$ for all
$V,W \in \Mod H$.
\end{cor}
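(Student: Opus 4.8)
The plan is to reduce the statement to Theorem~\ref{T:LocFin} by passing to the algebraic closure of $\k$ and then descending via Lemma~\ref{L:Field}. The guiding observation is that virtual cocommutativity becomes virtual pointedness after base change to an algebraically closed field, so that the already-established tensor property over such a field can be transferred back to $\k$.

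First I would set up the base change. Write $F$ for an algebraic closure of $\k$ and form the Hopf $F$-algebra $H_F = H \otimes F$. If $H$ is finitely generated as right module over a cocommutative Hopf subalgebra $K$, then $K_F = K \otimes F$ is a cocommutative Hopf subalgebra of $H_F$: cocommutativity is preserved under base change, and flatness of $F$ over $\k$ guarantees that the inclusion $K \hookrightarrow H$ extends to an inclusion $K_F \hookrightarrow H_F$. Moreover $H_F$ remains finitely generated as right $K_F$-module. By \cite[Lemma 8.0.1]{mS69}, the cocommutative Hopf algebra $K_F$ over the algebraically closed field $F$ is pointed, whence $H_F$ is virtually pointed and Theorem~\ref{T:LocFin} applies to it.

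Next I would invoke Theorem~\ref{T:LocFin} over $F$. Using the canonical isomorphism of $H_F$-modules $(V \otimes W)_F \cong V_F \otimes_F W_F$ (base change is monoidal, intertwining the actions via $\Delta_F = \Delta \otimes \Id_F$), the theorem yields
\[
\fin{((V \otimes W)_F)} = \fin{(V_F)} \otimes_F \fin{(W_F)}.
\]
Now Lemma~\ref{L:Field} rewrites each locally finite part: the left-hand side becomes $(\fin{(V \otimes W)})_F$, while the right-hand side becomes $(\fin V)_F \otimes_F (\fin W)_F \cong (\fin V \otimes \fin W)_F$. Thus the two $\k$-subspaces $\fin V \otimes \fin W$ and $\fin{(V \otimes W)}$ of $V \otimes W$ acquire equal extensions to $F$.

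Finally, the inclusion $\fin V \otimes \fin W \subseteq \fin{(V \otimes W)}$ already holds over $\k$, exactly as in the easy direction of the proof of Theorem~\ref{T:LocFin} (and for any $H$). Since $F/\k$ is faithfully flat, an inclusion of $\k$-subspaces whose extensions to $F$ coincide must itself be an equality, so $\fin V \otimes \fin W = \fin{(V \otimes W)}$. I expect the only delicate point to be the second step—verifying that base change genuinely converts the virtually cocommutative hypothesis into the virtually pointed hypothesis of Theorem~\ref{T:LocFin}, including the compatibility of the Hopf and module structures under $\bdot \otimes F$; once this bookkeeping is in place, the descent is routine.
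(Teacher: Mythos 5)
Your proof is correct and takes essentially the same route as the paper: extend scalars to an algebraic closure $F$, note that $H_F$ is then virtually pointed so Theorem~\ref{T:LocFin} applies, and descend using Lemma~\ref{L:Field}. The only cosmetic difference is in the last step, where the paper intersects the resulting identity of $F$-subspaces with $V \otimes W$ while you invoke the a priori inclusion $\fin V \otimes \fin W \subseteq \fin{(V\otimes W)}$ together with faithful flatness of $F/\k$; both are valid.
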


\begin{proof}
Let $F$ denote an algebraic closure of $\k$. Then $H_F$ is virtually pointed and we may apply
Theorem~\ref{T:LocFin} and Lemma~\ref{L:Field} with $R = H$ to obtain
$\fin{(V \otimes W)} = \fin{(V_F \otimes_F W_F)} \cap (V \otimes W) 
= ((\fin V)_F \otimes_F (\fin W)_F) \cap (V \otimes W) = \fin V \otimes \fin W$. 
\end{proof}


\subsection{Proof of Theorem~\ref{T:Cocom}(b)}
\label{SS:Cocom}
If $k \in K$ for some cocommutative Hopf subalgebra $K \subseteq H$, 
then Equation~\eqref{E:D} becomes 
\[
\Delta(k.h) = k_{(1)}h_{(1)}\ant(k_{(2)}) \otimes k_{(3)}.h_{(2)}
= k_{(1)}.h_{(1)} \otimes k_{(2)}.h_{(2)} = k.\Delta h.
\]
So $\Delta$ is a map in $\Mod K$. 
This also holds for the antipode $\ant \colon \ad H \to \ad H$
as is easily checked. Assuming $H$ to be finitely generated as right $K$-module,
locally finite parts can be calculated for $K$ (\S\ref{SS:Intro2}) and we obtain
$\Delta(\adfin H) \subseteq
\fin{(\ad H \otimes \ad H)} = \adfin H \otimes \adfin H$\,, where the last equality holds by Corollary~\ref{C:Field},
and $\ant(\adfin H) \subseteq \adfin H$.
Thus, $\adfin H$ is a Hopf subalgebra if $H$ is virtually cocommutative. \qed


\subsection{Proof of Proposition~\ref{P:Dietz}}
\label{SS:ProofDietzmann}

For any $\k$-subspace $V \subseteq H$ and any $n \in \ZZ_+$, let $V^{(n)} \subseteq H$ denote the 
subspace that is generated by the products $v_1v_2\dots v_m$ with $v_i \in V$ and $m \le n$.
Thus, the subalgebra that is generated by $C = \sum_{i=1}^k C_i$ is equal to $\bigcup_{n \ge 0} C^{(n)}$.
We must show that this union is finite dimensional.
Since $C$ is finite dimensional, so are all $C^{(n)}$. Therefore, it suffices to show that
$C^{(k)} = C^{(k+i)}$ for all $i \ge 0$. To prove this, let $s > k$ and consider
a monomial of length $s$,
\[
x = c_{i_1}c_{i_2}\dots c_{i_s} \quad  (c_i \in C_i).
\]
We will show that $x \in C^{(s-1)}$, which will prove
the desired equality $C^{(s)} = C^{(s-1)}$.

Observe that not all indices $i_l$ in the above expression for $x$ can be distinct.
Let $\beta$ denote the shortest gap between any two factors in $x$ having the same index.
If $\beta = 0$, then two adjacent factors belong to the same $C_i$ and we may replace the pair by their
product in $C_i$, thereby representing $x$ as an element of $C^{(s-1)}$. Now assume that
$\beta > 0$ and, without loss, assume that a pair of factors with index $i=1$ has gap $\beta$.
Thus, $x$ contains a
length-$2$ submonomial of the form $cd$ with $c \in C_1$ being the first factor of the pair
and $d \in C_i$ $(i \neq 1)$.
Using the general formula $hk =  (h_{(1)}.k)h_{(2)}$ for $h,k \in H$
we may write $cd = (c_{(1)}.d)c_{(2)}$, a finite sum with all $c_{(1)}.d \in C$ and 
all $c_{(2)} \in C_1$, because $\Delta C_1 \subseteq H \otimes C_1$ and $H.C_i \subseteq C$. 
(If $C_1$ is a sub-bialgebra,
then it suffices to assume that $C_1.C \subseteq C$.) We may further
expand all $c_{(1)}.d  \in C$ into sums with terms from the various $C_i$.
Thus, with $y$ and $z$ denoting the initial and final segments of $x$
(possibly empty) before and after $cd$, respectively, the 
resulting sum for $x = ycdz = y(c_{(1)}.d)c_{(2)}z$ consists of length-$s$ monomials
of the same form as $x$ above, but having a lower $\beta$-value. We may therefore
argue by induction to finish the proof.


\section{Remarks} 
\label{S:Remarks}


\subsection{}
\label{SS:Virtual}

For the special case of cocommutative Hopf algebras, part (b) of Theorem~\ref{T:Cocom}
is a consequence of part (a), because left coideal subalgebras
then coincide with Hopf subalgebras. However, virtually cocommutative
Hopf algebras form a much wider class, which includes all finite-dimensional Hopf algebras.
Since Theorem~\ref{T:Cocom} is trivial in the finite-dimensional case, we mention the following
infinite-dimensional example. 

Let $n$ be odd and let $q\in \k$ be a primitive $n$-th root of unity. Let 
$U = U_q(\fsl_2)$ be the quantum enveloping algebra with standard generators $E, F, K^{\pm 1}$
as in \cite[I.3 and III.2]{kBkG02}.  
The elements $E^n$ and $F^n$ belong to the center of $U$ and $I = E^nU + F^nU$ is a Hopf ideal 
of $U$, because $\ant I \subseteq I$ and $\Delta E^n =E^n \otimes 1 + K^n \otimes E^n$,
$\Delta F^n = F^n \otimes K^{-n} + 1 \otimes F^n$ by the $q$-binomial formula; 
see \cite[Exercise 9.3.14]{mL18}. So $H = U/I$ is a Hopf algebra; it is not cocommutative
but virtually cocommutative, being finitely generated as (right and left) module over the 
cocommutative Hopf subalgebra $\k[K^{\pm 1}]$. In this example, $\adfin H = H$, 
because the adjoint (conjugation) action of $K$ on $H$ is locally finite.


\subsection{}
\label{SS:Free}

In general, $H$ need not be free over $\adfin H$.
To see this, we recall the following result of Masuoka \cite{aM91}.
Assume that $H$ is pointed and let $B$ be a left coideal subalgebra of $H$. Then $H$ is free as a right or left 
$B$-module if and only if $\ant(B\cap \k G) = B\cap \k G$, where 
$G = G(H)$ is the group of grouplike elements.
This condition is often easy to check. For example, taking $H = U_q(\fg)$, $B = \adfin H$ 
and using the notation of \cite[Theorem 4.10]{aJgL94}, we have $\tau(\lambda)\in \adfin H \cap G(H)$ but 
$\ant(\tau(\lambda)) = \tau(-\lambda)\notin \adfin H$ for $\lambda\in -R^+(\pi)$. So $H$ is not free over $\adfin H$
in this case.


\def\cprime{$'$}
\providecommand{\bysame}{\leavevmode\hbox to3em{\hrulefill}\thinspace}
\providecommand{\MR}{\relax\ifhmode\unskip\space\fi MR }
\providecommand{\MRhref}[2]{%
  \href{http://www.ams.org/mathscinet-getitem?mr=#1}{#2}
}
\providecommand{\href}[2]{#2}



\begin{thebibliography}{10}

\bibitem{jB06}
Jeffrey Bergen, \emph{Adjoint actions of cocommutative {H}opf algebras},
  Groups, rings and algebras, Contemp. Math., vol. 420, Amer. Math. Soc.,
  Providence, RI, 2006, pp.~25--34. \MR{2279230}

\bibitem{jBdP90}
Jeffrey Bergen and Donald~S. Passman, \emph{Delta methods in enveloping rings},
  J. Algebra \textbf{133} (1990), no.~2, 277--312. \MR{1067408}

\bibitem{jBdP92}
\bysame, \emph{Delta methods in enveloping algebras of {L}ie superalgebras},
  Trans. Amer. Math. Soc. \textbf{334} (1992), no.~1, 259--280. \MR{1076611}

\bibitem{jBdP93}
\bysame, \emph{Delta methods in enveloping rings. {II}}, J. Algebra
  \textbf{156} (1993), no.~2, 494--534. \MR{1216480}

\bibitem{nB70}
Nicolas Bourbaki, \emph{{A}lg\`ebre. {C}hapitres 1 \`a 3}, Hermann, Paris,
  1970. \MR{43 \#2}

\bibitem{kBkG02}
Kenneth~A. Brown and Kenneth~R. Goodearl, \emph{Lectures on algebraic quantum
  groups}, Advanced Courses in Mathematics. CRM Barcelona, Birkh\"auser Verlag,
  Basel, 2002. \MR{1898492 (2003f:16067)}

\bibitem{aD37}
A.~P. Dietzmann, \emph{On $p$-groups}, Dokl. Akad. Nauk SSSR \textbf{15}
  (1937), 71--76.

\bibitem{aJgL94}
Anthony Joseph and Gail Letzter, \emph{Separation of variables for quantized
  enveloping algebras}, Amer. J. Math. \textbf{116} (1994), no.~1, 127--177.
  \MR{1262429}

\bibitem{aK56}
A.~G. Kurosh, \emph{The theory of groups. {V}ol. {II}}, Chelsea Publishing
  Company, New York, N.Y., 1956, Translated from the Russian and edited by K.
  A. Hirsch. \MR{0080089}

\bibitem{gL02}
Gail Letzter, \emph{Coideal subalgebras and quantum symmetric pairs}, New
  directions in {H}opf algebras, Math. Sci. Res. Inst. Publ., vol.~43,
  Cambridge Univ. Press, Cambridge, 2002, pp.~117--165. \MR{1913438}

\bibitem{mL18}
Martin Lorenz, \emph{A tour of representation theory}, Graduate Studies in
  Mathematics, American Mathematical Society, Providence, RI, 2018.

\bibitem{aM91}
Akira Masuoka, \emph{On {H}opf algebras with cocommutative coradicals}, J.
  Algebra \textbf{144} (1991), no.~2, 451--466. \MR{1140616}

\bibitem{sM93}
Susan Montgomery, \emph{Hopf algebras and their actions on rings}, CBMS
  Regional Conference Series in Mathematics, vol.~82, Published for the
  Conference Board of the Mathematical Sciences, Washington, DC, 1993.
  \MR{94i:16019}

\bibitem{dP77}
Donald~S. Passman, \emph{The algebraic structure of group rings},
  Wiley-Interscience [John Wiley \& Sons], New York, 1977, Pure and Applied
  Mathematics. \MR{81d:16001}

\bibitem{dR12}
David~E. Radford, \emph{Hopf algebras}, Series on Knots and Everything,
  vol.~49, World Scientific Publishing Co. Pte. Ltd., Hackensack, NJ, 2012.
  \MR{2894855}

\bibitem{mS69}
Moss~E. Sweedler, \emph{Hopf algebras}, Mathematics Lecture Note Series, W. A.
  Benjamin, Inc., New York, 1969. \MR{0252485 (40 \#5705)}

\end{thebibliography}
\end{document}